\begin{document}
\newcommand{\dyle}{\displaystyle}
\newcommand{\R}{{\mathbb{R}}}
\newcommand{\Hi}{{\mathbb H}}
\newcommand{\Ss}{{\mathbb S}}
\newcommand{\N}{{\mathbb N}}
\newcommand{\Rn}{{\mathbb{R}^n}}
\newcommand{\F}{{\mathcal F}}
\newcommand{\ieq}{\begin{equation}}
\newcommand{\eeq}{\end{equation}}
\newcommand{\ieqa}{\begin{eqnarray}}
\newcommand{\eeqa}{\end{eqnarray}}
\newcommand{\ieqas}{\begin{eqnarray*}}
\newcommand{\eeqas}{\end{eqnarray*}}
\newcommand{\f}{\hat{f}}
\newcommand{\Bo}{\put(260,0){\rule{2mm}{2mm}}\\}
\newcommand{\1}{\mathlarger{\mathlarger{\mathbbm{1}}}}

%- Theorems and similar stuff: ------

\theoremstyle{plain}
\newtheorem{theorem}{Theorem} [section]
\newtheorem{corollary}[theorem]{Corollary}
\newtheorem{lemma}[theorem]{Lemma}
\newtheorem{proposition}[theorem]{Proposition}
\def\neweq#1{\begin{equation}\label{#1}}
\def\endeq{\end{equation}}
\def\eq#1{(\ref{#1})}

%- Definitions -----------------------------

\theoremstyle{definition}
\newtheorem{definition}[theorem]{Definition}
\newtheorem{remark}[theorem]{Remark}
\numberwithin{figure}{section}

\title[It\^o vs Stratonovich]{It\^o vs Stratonovich in the presence of absorbing states}

\author[\'A. Correales, C. Escudero]{\'Alvaro Correales \& Carlos Escudero}
\address{}
\email{}

\keywords{Stochastic differential equations, uniqueness of solution, multiplicity of solutions, It\^o vs Stratonovich dilemma
\\ \indent 2010 {\it MSC: 60H05, 60H10, 60J60, 82C31}}

\date{\today}

\begin{abstract}
It is widely assumed that there exists a simple transformation from the It\^o interpretation to the one by Stratonovich and back for any
stochastic differential equation of applied interest. While this transformation exists under suitable conditions, and transforms one
interpretation into the other at the price of modifying the drift of the equation, it cannot be considered universal.
We show that a class of stochastic differential equations, characterized by the presence of absorbing states and of interest in applications,
does not admit such a transformation.
In particular, formally applying this transformation may lead to the disappearance of some absorbing states.
In turn, this modifies the long-time, and even the intermediate-time, behavior of the solutions.
The number of solutions can also be modified by the unjustified application of the mentioned transformation, as well as by
a change in the interpretation of the noise. We discuss how these facts affect the classical debate on the It\^o vs Stratonovich dilemma.
\end{abstract}
\maketitle

\section{Introduction}

Stochastic differential equations (SDEs) can be na\"{\i}vely regarded as nonautonomous dynamical systems of the sort
\begin{equation}\label{white}
dX_t = \mu(X_t,t) \, dt + \sigma(X_t,t) \, \xi_t,
\end{equation}
where the nonautonomous forcing $\xi_t$ is a random function known as \emph{white noise}: this stochastic process is supposed to take independent
and Gaussian distributed values at each time step. While this introduction of SDEs can be thought of as simple and useful for modeling, it lacks
any mathematical precision. It is the goal of classical stochastic analysis to give a precise meaning to such a model, and this goal can be
achieved by means of the introduction of a suitable stochastic integral. Among the infinitely many possibilities in this respect, two stand on a privileged
position, at least for historical reasons. These are the It\^o integral, that takes~\eqref{white} into
\begin{equation}\label{whiteito}
dX_t = \mu(X_t,t) \, dt + \sigma(X_t,t) \, dW_t,
\end{equation}
and the Stratonovich integral, which yields
\begin{equation}\label{whitestr}
dX_t = \mu(X_t,t) \, dt + \sigma(X_t,t) \circ dW_t;
\end{equation}
the properties of both have been studied profoundly~\cite{kuo,oksendal}. Obviously, it is a fundamental modeling question to be able to select the right
\emph{interpretation of noise} in a given application; in other words, it is key to distinguish which SDE, \eqref{whiteito} or~\eqref{whitestr},
is the right mathematical description of a particular natural or social phenomenon. In order to achieve this goal, it is necessary to understand
the analytical properties of both equations.

There is a widespread belief in the existence of a sort of \emph{analytical equivalence} between the It\^o and Stratonovich formulations of a SDE in the sense that a simple formula is able to connect both. In particular, the It\^o SDE
\begin{equation}\label{its}
d X_t = \mu(X_t,t) \, dt + \sigma(X_t,t) \, dW_t,
\end{equation}
is supposed to be equivalent to the Stratonovich SDE
\begin{equation}\label{sfi}
d X_t = \left[ \mu(X_t) - \frac12 \sigma'(X_t) \sigma(X_t) \right] dt + \sigma(X_t) \circ dW_t,
\end{equation}
with an analogous formula for the inverse transformation, under very general circumstances~\cite{mmcc}.
Indeed, for regular enough drift and diffusion terms this is
the case, but in order to highlight the importance of the counterexamples, let us state this fact in a precise manner.

\begin{theorem}\label{itotost}
Let $\mu(\cdot,\cdot): \mathbb{R} \times \mathbb{R}_+ \longrightarrow \mathbb{R}$
and $\sigma(\cdot,\cdot): \mathbb{R} \times \mathbb{R}_+ \longrightarrow \mathbb{R}$
be two globally Lipschitz continuous functions such that
$\sigma(\cdot,\cdot)$ is continuously differentiable with respect to the first variable and
$\sigma'(\cdot,\cdot) \sigma(\cdot,\cdot)$ is also globally Lipschitz continuous,
where the prime denotes differentiation with respect to the first variable.
Then the It\^o SDE
$$
dX_t = \mu(X_t,t) \, dt + \sigma(X_t,t) \, dW_t, \qquad X_0=x_0 \in \mathbb{R},
$$
and the Stratronovich SDE
$$
dY_t = \left[ \mu(Y_t,t) -\frac12 \sigma'(Y_t,t) \sigma(Y_t,t) \right] dt + \sigma(Y_t,t) \circ dW_t, \qquad Y_0=x_0 \in \mathbb{R},
$$
both possess a unique solution such that $X_t=Y_t$ for all $t \in [0,T]$ almost surely, for any $T>0$.
Correspondingly the Stratonovich SDE
$$
dX_t = \mu(X_t,t) \, dt + \sigma(X_t,t) \circ dW_t, \qquad X_0=x_0 \in \mathbb{R},
$$
and the It\^o SDE
$$
dY_t = \left[ \mu(Y_t,t) +\frac12 \sigma'(Y_t,t) \sigma(Y_t,t) \right] dt + \sigma(Y_t,t) \, dW_t, \qquad Y_0=x_0 \in \mathbb{R},
$$
both possess a unique solution such that $X_t=Y_t$ for all $t \in [0,T]$ almost surely, for any $T>0$.
\end{theorem}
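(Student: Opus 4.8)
The plan is to show that each Stratonovich equation appearing in the statement is, by the very definition of the Stratonovich integral, equivalent to an It\^o equation with globally Lipschitz coefficients, and then to quote the classical well-posedness theorem for such It\^o equations. I would begin by recording the conversion formula. If $Y$ is a continuous semimartingale adapted to the Brownian filtration whose local martingale part is $\int_0^\cdot \sigma(Y_s,s)\,dW_s$, then, writing $[\,\cdot\,,\,\cdot\,]$ for the quadratic covariation,
\[
\int_0^t \sigma(Y_s,s)\circ dW_s = \int_0^t \sigma(Y_s,s)\,dW_s + \frac12\,[\,\sigma(Y_\cdot,\cdot),W\,]_t = \int_0^t \sigma(Y_s,s)\,dW_s + \frac12\int_0^t \sigma'(Y_s,s)\,\sigma(Y_s,s)\,ds,
\]
the first equality being the definition of the Stratonovich integral and the second following from $[Y,W]_t = \int_0^t \sigma(Y_s,s)\,ds$ together with the identity $[\,\sigma(Y_\cdot,\cdot),W\,]_t = \int_0^t \sigma'(Y_s,s)\,d[Y,W]_s$. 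Substituting this into the two Stratonovich equations of the statement, one sees that a process solves $dY_t = [\mu(Y_t,t) - \frac12\sigma'(Y_t,t)\sigma(Y_t,t)]\,dt + \sigma(Y_t,t)\circ dW_t$ if and only if it solves the It\^o equation $dY_t = \mu(Y_t,t)\,dt + \sigma(Y_t,t)\,dW_t$, and similarly that a process solves $dX_t = \mu(X_t,t)\,dt + \sigma(X_t,t)\circ dW_t$ if and only if it solves $dX_t = [\mu(X_t,t) + \frac12\sigma'(X_t,t)\sigma(X_t,t)]\,dt + \sigma(X_t,t)\,dW_t$; in each case the finite-variation term $\frac12\int \sigma'\sigma\,ds$ is determined and the local martingale part is $\int\sigma(\cdot,\cdot)\,dW$, so the correspondence is genuinely two-sided and not circular.

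Granting the reduction, I would conclude as follows. Both It\^o equations obtained above have globally Lipschitz coefficients: for the first, $\mu$ and $\sigma$ are globally Lipschitz by hypothesis; for the second, $\sigma$ is globally Lipschitz and $\mu + \frac12\sigma'\sigma$ is globally Lipschitz because $\mu$ and $\sigma'\sigma$ both are. Global Lipschitz continuity implies linear growth, so the classical theorem on It\^o SDEs (Picard iteration plus the Banach fixed point theorem; see e.g.~\cite{oksendal,kuo}) gives, for every $T>0$, a pathwise unique strong solution on $[0,T]$, and uniqueness makes these solutions consistent as $T$ varies. Let $X$ be the solution of $dX_t = \mu\,dt + \sigma\,dW_t$; by the equivalence, $X$ also solves the Stratonovich equation for $Y$, while any solution of that Stratonovich equation solves $dY_t = \mu\,dt + \sigma\,dW_t$ and hence equals $X$. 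This yields both existence and uniqueness for the Stratonovich equation and the identity $X_t = Y_t$ on $[0,T]$ almost surely. The second assertion is proved symmetrically, starting from the solution of $dX_t = [\mu + \frac12\sigma'\sigma]\,dt + \sigma\,dW_t$.

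The one step that demands genuine care---and the only place the hypotheses are used beyond the classical theorem---is the covariation identity $[\,\sigma(Y_\cdot,\cdot),W\,]_t = \int_0^t \sigma'(Y_s,s)\,d[Y,W]_s$ when $\sigma$ is merely $C^1$ (and not $C^2$) in the first variable: the textbook route through It\^o's formula applied to $\sigma(Y_t,t)$ requires a second spatial derivative and is therefore unavailable. I would instead invoke the general fact that $[F(Z),M]_t = \int_0^t F'(Z_s)\,d[Z,M]_s$ for $F\in C^1$, $Z$ a continuous semimartingale and $M$ a continuous local martingale, which follows by expanding $F(Z_{t_{i+1}}) - F(Z_{t_i}) = F'(Z_{t_i})(Z_{t_{i+1}}-Z_{t_i}) + o(|Z_{t_{i+1}}-Z_{t_i}|)$ in the Riemann sums defining the covariation and noting that the remainder does not contribute; alternatively, approximate $\sigma$ by smooth functions converging to it in $C^1$ on compact sets and pass to the limit, using the linear growth of $\sigma'\sigma$ and the $L^2$-continuity of the It\^o integral. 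Everything else---the classical existence and uniqueness statement, linear growth from global Lipschitz continuity, and the consistency of solutions over nested time intervals---is standard and routine.
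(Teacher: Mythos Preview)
The paper does not prove this theorem; immediately after the statement it writes only that ``This statement and its corresponding proof can be essentially found in~\cite{kuo}.'' Your argument---convert the Stratonovich integral to It\^o plus the $\tfrac12\sigma'\sigma$ correction via the covariation identity, then invoke the classical Lipschitz existence-and-uniqueness theorem for the resulting It\^o equation---is precisely the standard route and is essentially what Kuo does, so there is nothing substantive to compare against beyond noting that your sketch is correct. The only cosmetic mismatch is that the paper \emph{defines} the Stratonovich integral through midpoint Riemann sums rather than the semimartingale formula $\int\sigma\,dW + \tfrac12[\sigma(Y_\cdot,\cdot),W]$ that you take as your starting point; linking the two requires one extra lemma (also standard and in Kuo), but under the $C^1$ hypothesis on $\sigma$ this is routine. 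Your flagging of the $C^1$-versus-$C^2$ issue in the covariation computation is apt and is exactly the place where the argument would fail for the non-smooth diffusions the rest of the paper goes on to study.
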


This statement and its corresponding proof can be essentially found in~\cite{kuo}. We will see that the requirement of continuous differentiability of
$\sigma(\cdot,\cdot)$ cannot be weakened to the \emph{reasonable} case of $\sigma(\cdot,\cdot)$ being smooth almost everywhere even if
$\sigma'(\cdot,\cdot) \sigma(\cdot,\cdot)$ is both smooth and bounded. More importantly, we will not illustrate this fact with
pathological pure mathematical counterexamples, but with models that possess an applied interest.

The ultimate goal of the present work is to highlight a fact that seems to have been forgotten in the physical literature. Indeed, in reference~\cite{mmcc}
one reads: ``It is evident that stochasticians of all kinds - mathematicians, physicists, engineers
and others - need constant reminders that the It\^o versus Stratonovich problem
was solved long ago'', and the authors refer to reference~\cite{kampen}. Actually, one finds that in both references the full discussion regarding
the \emph{It\^o vs Stratonovich dilemma} is restricted to the transformation presented in Theorem~\ref{itotost}. Herein we will argue why any such
discussion is necessarily incomplete and how a change of interpretation or the formal application of the mentioned transformation may lead to the
appearance or disappearance of solutions. More importantly, we will try to convince the reader that our arguments, as well as the
\emph{It\^o interpretation of noise}, are not just some of ``those vagaries of the mathematical mind that are of no concern to him''~\cite{kampen}.

\subsection{Preliminaries and outline of the results}

Let us start making precise the theoretical framework we will use along this work.
First consider a Brownian motion $\{B_t, t \geq 0\}$ and a filtration $\{\F_t, t \geq 0\}$ such that:
\begin{itemize}
\item[(i)] For all $t \geq 0$, $B_t$ is $\F_t$-measurable,
\item[(ii)] for all $0 \leq s < t$, $B_t-B_s$ is independent of $\F_s$.
\end{itemize}
For the definition of the It\^o integral we take the following one.

\begin{definition}
Let $\{f(t), t\in [0,T]\}$ be a $\{\F_t\}$-adapted stochastic process. We define the \textit{It\^o stochastic integral} of $f(t)$ by
\begin{equation}\nonumber
\int_0^T f(t)\, dB_t : = \lim_{\left| \Pi_n \right| \to 0} \sum_{i=1}^n f(t_{i-1}) \left[B(t_i)-B(t_{i-1}) \right]
\end{equation}
in probability, provided that the limit exists, where the $\Pi_n$'s are partitions of the interval $[0,T]$
and $\left| \Pi_n \right|$ denote their diameters.
\end{definition}

For different definitions and properties of this integral see~\cite{kuo,oksendal}. We also use the following definition of Stratonovich integral.

\begin{definition}
Let $\{f(t), t\in [0,T]\}$ be a $\{\F_t\}$-adapted stochastic process. We define the \textit{Stratonovich stochastic integral} of $f(t)$ by
\begin{equation}\nonumber
\int_0^T f(t) \circ dB_t : = \lim_{\left| \Pi_n \right| \to 0} \sum_{i=1}^n f\left(\frac{t_i + t_{i-1}}{2}\right) \left[B(t_i)-B(t_{i-1}) \right]
\end{equation}
in probability, provided that the limit exists, where the $\Pi_n$'s are partitions of the interval $[0,T]$
and $\left| \Pi_n \right|$ denote their diameters.
\end{definition}

For more on the Stratonovich integral, including its extension to anticipating integrands, see~\cite{nualart,stratonovich}.

The remainder of this work is as follows. In Section~\ref{essence} we show that the transformation between noise interpretations present
in Theorem~\ref{itotost} cannot be considered as universal. In Section~\ref{longtime} we illustrate how this fact affects the long-time dynamics
of a suitable class of SDEs. Moreover, we show in Section~\ref{inttime} how this discrepancy in the dynamical behavior of a SDE can happen already
at intermediate times, highlighting its importance in applications. In Section~\ref{feller} we show that either changing the noise interpretation of a given
SDE or formally applying the transformation in Theorem~\ref{itotost} may lead to a modification in the number of solutions. Finally,
in Section~\ref{conclusions} we draw our main conclusions.

\section{Essence of the problem and one general result}
\label{essence}

Consider the SDE
\begin{equation}
\label{fbd}
dX_t = \sqrt{2 X_t} \, dW_t, \qquad X_0=0,
\end{equation}
which clearly admits the $X_t=0$ solution. This equation arises as the continuum limit of critical Galton-Watson branching
processes~\cite{feller1,feller2} and it is known as Feller branching diffusion~\cite{remark}.
A spatially extended version of this model has been used to study the growth and motion of plankton populations~\cite{adler},
and its trivial solution, which represents the extinction of the biological population, has a full physical meaning and should be present.
Let us consider its formal Stratonovich form
\begin{equation}
\label{illstr}
dX_t = -\frac12 \, dt + \sqrt{2 X_t} \circ dW_t, \qquad X_0=0,
\end{equation}
which clearly does not admit $X_t=0$ as solution. Correspondingly the Stratonovich SDE
\begin{equation}
\label{str2}
dX_t = \sqrt{2 X_t} \circ dW_t, \qquad X_0=0,
\end{equation}
admits the trivial solution but its formal It\^o counterpart
\begin{equation}
\label{ito2}
dX_t = \frac12 \, dt + \sqrt{2 X_t} \, dW_t, \qquad X_0=0,
\end{equation}
does not. We can generalize this fact to the following result.

\begin{theorem}\label{main1}
Let $f(\cdot,\cdot): \mathbb{R} \times \mathbb{R}_+ \longrightarrow \mathbb{R}$
and $g(\cdot,\cdot): \mathbb{R} \times \mathbb{R}_+ \longrightarrow \mathbb{R}_+$
be two continuous functions such that
$g'(\cdot,\cdot)$ is also continuous, where the prime denotes differentiation
with respect to the first argument. Assume also that
$f(x_0,t)=g(x_0,t)=0$ for all $t \ge 0$ and $g'(x_0,0) \neq 0$ for some $x_0 \in \mathbb{R}$.
Then the It\^o SDE
$$
dX_t = f(X_t,t) \, dt + \sqrt{2 g(X_t,t)} \, dW_t, \qquad X_0=x_0,
$$
admits the trivial solution $X_0=x_0$, but its formal Stratonovich dual
$$
dX_t = \left[ f(X_t,t) -\frac12 g'(X_t,t) \right] dt + \sqrt{2 g(X_t,t)} \circ dW_t, \qquad X_0=x_0,
$$
does not admit such a solution in the interval $[0,T]$ for any $T > 0$.
Correspondingly the Stratonovich SDE
$$
dX_t = f(X_t,t) \, dt + \sqrt{2 g(X_t,t)} \circ dW_t, \qquad X_0=x_0,
$$
admits the trivial solution $X_0=x_0$, but its formal It\^o dual
$$
dX_t = \left[ f(X_t,t) + \frac12 g'(X_t,t) \right] dt + \sqrt{2 g(X_t,t)} \, dW_t, \qquad X_0=x_0,
$$
does not admit such a solution in the interval $[0,T]$ for any $T > 0$.
\end{theorem}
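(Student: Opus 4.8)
The plan is to verify all four assertions directly from the definitions of the It\^o and Stratonovich integrals given above, \emph{without} invoking the transformation of Theorem~\ref{itotost} --- which is exactly what fails here, since $g(x_0,\cdot)\equiv 0$ together with $g'(x_0,0)\neq 0$ forces $\sqrt{2g(\cdot,\cdot)}$ to be non-differentiable at $x_0$. For the two positive statements I would substitute the constant (hence trivially adapted) candidate $X_t\equiv x_0$ into the It\^o equation $dX_t = f(X_t,t)\,dt + \sqrt{2g(X_t,t)}\,dW_t$; by the hypothesis $f(x_0,t)=g(x_0,t)=0$ for all $t\ge 0$, both terms of the integral formulation $X_t = x_0 + \int_0^t f(x_0,s)\,ds + \int_0^t \sqrt{2g(x_0,s)}\,dW_s$ vanish (the stochastic integrand is identically zero, so every approximating sum in its definition is a sum of zeros), and the equation holds. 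The same computation, with the midpoint-evaluated Riemann sums instead, shows $X_t\equiv x_0$ also solves the Stratonovich equation $dX_t = f(X_t,t)\,dt + \sqrt{2g(X_t,t)}\circ dW_t$.

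For the two negative statements I would argue by contradiction. Suppose $X_t\equiv x_0$ solves the formal Stratonovich dual $dX_t = \left[f(X_t,t) - \frac12 g'(X_t,t)\right]dt + \sqrt{2g(X_t,t)}\circ dW_t$ on some interval $[0,T]$, $T>0$. Evaluated along this path, $f(x_0,s)=0$ and, exactly as above, the Stratonovich integral is zero, so the integral form collapses to $x_0 = x_0 - \frac12\int_0^t g'(x_0,s)\,ds$ for every $t\in[0,T]$, i.e.\ $\int_0^t g'(x_0,s)\,ds = 0$ throughout $[0,T]$. Since $g'(\cdot,\cdot)$ is continuous and $g'(x_0,0)\neq 0$, the map $s\mapsto g'(x_0,s)$ keeps a fixed nonzero sign on some $[0,\delta]$ with $0<\delta\le T$, hence $\int_0^\delta g'(x_0,s)\,ds\neq 0$, a contradiction. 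The formal It\^o dual, with drift $f + \frac12 g'$, is handled verbatim and produces the same identity $\int_0^t g'(x_0,s)\,ds = 0$ and the same contradiction.

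I do not anticipate any serious obstacle: the mechanism is that the absorbing state makes the diffusion coefficient vanish identically along the candidate trajectory, so the only surviving term is the spurious drift $\pm\frac12 g'(x_0,\cdot)$ introduced by the formal change of interpretation, and $g'(x_0,0)\neq 0$ excludes it. The one point I would be careful to state explicitly is that ``$X_t\equiv x_0$ is (not) a solution'' must be read through the integral formulation, and that the It\^o and Stratonovich integrals of an identically-zero integrand are zero straight from the definitions --- so no regularity of $\sqrt{2g(\cdot,\cdot)}$ is ever used beyond its vanishing at $x_0$; in particular the assumption $g\ge 0$ is needed only so that $\sqrt{2g}$ is real-valued.
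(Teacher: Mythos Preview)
Your proposal is correct and follows essentially the same route as the paper's own proof: substitute the constant candidate $X_t\equiv x_0$ into the integral formulation, observe that the stochastic integrals vanish because the integrand is identically zero, and then rule out the dual equations by showing that the residual deterministic identity $\int_0^t g'(x_0,s)\,ds=0$ contradicts the continuity of $g'$ together with $g'(x_0,0)\neq 0$. The paper argues only the first pair explicitly and notes that the second follows identically; your added remarks about the failure of Theorem~\ref{itotost} and about not needing any regularity of $\sqrt{2g}$ beyond its vanishing at $x_0$ are correct and useful but not part of the paper's proof.
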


\begin{proof}
We will prove explicitly the first affirmation as the proof of the second follows identically. The SDE
$$
dX_t = f(X_t,t) \, dt + \sqrt{2 g(X_t,t)} \, dW_t, \qquad X_0=x_0,
$$
actually means
$$
X_t = x_0 + \int_0^t f(X_s,s) \, ds + \int_0^t \sqrt{2 g(X_s,s)} \, dW_s, \qquad t \in [0,T].
$$
Substituting $X_t=x_0$ we find
$$
x_0 = x_0 + \int_0^t f(x_0,s) \, ds + \int_0^t \sqrt{2 g(x_0,s)} \, dW_s = x_0, \qquad t \in [0,T],
$$
so it is obviously a solution. On the other hand the SDE
$$
dX_t = \left[ f(X_t,t) -\frac12 g'(X_t,t) \right] dt + \sqrt{2 g(X_t,t)} \circ dW_t, \qquad X_0=x_0,
$$
actually means
$$
X_t = x_0 + \int_0^t \left[ f(X_s,s) -\frac12 g'(X_s,s) \right] ds + \int_0^t \sqrt{2 g(X_s,s)} \circ dW_s, \qquad t \in [0,T],
$$
so substituting $X_t=x_0$ yields
$$
x_0 = x_0 -\frac12 \int_0^t g'(x_0,s) \, ds \Longleftrightarrow \int_0^t g'(x_0,s) \, ds = 0,
$$
for every $t \in [0,T]$. Note that this integral is well-defined by continuity of $g'$ and assume without loss of generality that $g'(x_0,0)>0$.
Again by continuity we know that $g'(x_0,t) >0$ for $t \in [0,\delta)$ for some $\delta >0$ sufficiently small; then
$$
\int_0^t g'(x_0,s) \, ds >0 \quad \text{for every} \, t \in (0,\delta),
$$
hence $x_0$ is not a solution in [0,T] for any $T >0$.
\end{proof}

Another example with two absorbing states is the SDE
\begin{equation}\label{2as}
dX_t = \left( X_t - X_t^2 \right) dt + \sqrt{2(X_t - X_t^2)} \, dW_t,
\end{equation}
which clearly admits the solutions $X_t=0$ and $X_t=1$ when subject initially to these values.
However its formal Stratonovich counterpart
\begin{equation}\label{st2as}
dX_t = \left(-\frac12 + 2 X_t - X_t^2 \right) dt + \sqrt{2(X_t - X_t^2)} \circ dW_t,
\end{equation}
does not possess any of these two solutions. Note that this example, as well as the one described before, are simple consequences of Theorem~\ref{main1}.
Note also that, for the second example, the same conclusion arises if the noise interpretations are interchanged, which is also a simple consequence
of this theorem.

\section{Long-time dynamics}
\label{longtime}

In the present section we offer a dynamical approach to the question at hand. In particular we will show that the matter under study arises through the
presence of degenerate boundaries. To this end we need to introduce the following definitions.

\begin{definition}
A boundary point in the interval of definition of a SDE is called accessible if the probability of reaching that point in a finite time is positive.
\end{definition}

\begin{definition}
An accessible boundary point $\left\lbrace a\right\rbrace$ in the interval of definition of a SDE
is called instantaneously reflecting if and only if the Lebesgue measure of the set $\left\lbrace t: X_t = a\right\rbrace$ is zero almost surely,
is called slowly reflecting if and only if the Lebesgue measure of the set $\left\lbrace t: X_t = a\right\rbrace$ is finite almost surely,
and is called reflecting if it is either instantaneously or slowly reflecting.
\end{definition}

Let us consider again SDE~\eqref{fbd} but now subject to a different initial condition $x_0 \in \mathbb{R}^+$: in this case
the point $\left\lbrace 0\right\rbrace$ is an accessible boundary.
Together with the fact that $X_t = 0$ is a solution to this SDE when initialized
at this boundary (i.e. it is an absorbing boundary), this implies that $\lim_{t \to \infty} X_t =0$ with probability one. Indeed,
the solution $X_t= Z_t/2$, where $Z_t$ denotes the $0-$dimensional squared Bessel process. Using the theory developed for this process we
conclude that $\lim_{t \to \infty} X_t =0$ a.s.~\cite{revuz}.
In sharp contrast and as we have already seen, its formal Stratonovich counterpart~\eqref{illstr} does not admit the trivial solution.
Correspondingly, for the SDE~\eqref{ito2} initialized at $x_0 \in \mathbb{R}^+$ the origin is accessible but instantaneously reflecting,
as in this case $X_t= \tilde{Z}_t/2$, where $\tilde{Z}_t$ denotes the $1-$dimensional squared Bessel process~\cite{revuz}.
On the other hand its formal Stratonovich counterpart~\eqref{str2} admits $X_t=0$ as solution.
This summarizes the problematic discussed in the previous section from a different viewpoint, but we still need to face the general situation.

From now on we focus on equations of the type
$$
dX_t = f(X_t) \, dt + \sqrt{2 g(X_t)} \, dW_t, \qquad X_0=\gamma_0 \in \mathbb{R},
$$
that is, on time-homogeneous diffusions,
but prior to analyzing their dynamical behavior, we need a result that precisely states their well-posedness.

\begin{definition}
Let $h: \left[a,\infty \right) \longrightarrow U \subseteq \mathbb{R}$, with $a \in \mathbb{R}$. We say that $h \in C^2\left[a,\infty \right)$
whenever there exist an $\epsilon >0$ and a function $h_{\epsilon} \in C^2\left(a-\epsilon,\infty\right)$ such that $h_{\epsilon}=h$
in $\left[a,\infty \right)$. Moreover we write $h \in BC^2\left[a,\infty \right)$ if the function $h$ along with its first and second derivatives are bounded in its domain of definition, with the understanding that $h'(a):=h_\epsilon'(a)$ and $h''(a):=h_\epsilon''(a)$.
\end{definition}

\begin{remark}
Although we will explicitly assume that the drift and diffusion terms of the SDEs under consideration belong to $BC^2\left[a,\infty \right)$,
this can be relaxed to assuming that their first and second derivatives are bounded, but the functions themselves just obey the linear growth
condition (i.e. they can be absolutely bounded by an affine function). All the results in this work still hold under this milder requirement,
as it already forbids finite time blow-ups, see for instance~\cite{jean}.
\end{remark}

\begin{proposition}\label{exun}
Let $f: \left[a,\infty \right) \longrightarrow \mathbb{R}$ and $g: \left[a,\infty \right)  \longrightarrow \mathbb{R}_+$,
with $a \in \mathbb{R}$, and $f,g \in BC^2\left[a,\infty \right)$. Assume that $g(x)=0$ if and only if $x=a$, with $g'(a)\neq 0$,
and that there exist constants $C,\delta >0$ such that
$g(x) \geq C$ for $x \geq a+\delta$. Additionally assume that the function $f$ satisfies the compatibility condition $f(a) \ge 0$.
Then, the It\^o SDE
\begin{equation}\label{theo_eq_0}
dX_t = f(X_t) \, dt + \sqrt{2 g(X_t)} \, dW_t, \qquad X_0 = x_0 \in [a,\infty),
\end{equation}
possesses a unique strong solution for all $t \geq 0$.	
\end{proposition}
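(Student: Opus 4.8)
The plan is to reduce \eqref{theo_eq_0} to an SDE with globally defined coefficients on all of $\mathbb{R}$, to obtain weak existence and pathwise uniqueness within the Yamada--Watanabe framework, and then to use the compatibility condition $f(a)\ge 0$ to show that the resulting solution never leaves $[a,\infty)$.

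First I would extend the coefficients by setting $\tilde f(x):=f(x)$, $\sigma(x):=\sqrt{2g(x)}$ for $x\ge a$, and $\tilde f(x):=f(a)$, $\sigma(x):=0$ for $x<a$. Since $f\in BC^2[a,\infty)$, the function $\tilde f$ is bounded and globally Lipschitz. The diffusion coefficient $\sigma$ is bounded and, although it fails to be Lipschitz near $a$ (because $g'(a)\neq 0$ makes $\sqrt{g}$ behave like $\sqrt{x-a}$ there), it is globally $\tfrac12$-Hölder continuous: from $(\sqrt u-\sqrt v)^2\le|u-v|$ for $u,v\ge 0$ one gets $|\sigma(x)-\sigma(y)|^2\le 2|g(x)-g(y)|\le 2\|g'\|_\infty|x-y|$ on $[a,\infty)$, and the extension by zero preserves this estimate across $x=a$. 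Hence the extended equation $dX_t=\tilde f(X_t)\,dt+\sigma(X_t)\,dW_t$ has continuous coefficients of linear growth, so it admits a weak solution with no explosion, i.e.\ defined for all $t\ge 0$; and by the Yamada--Watanabe pathwise uniqueness criterion (Lipschitz drift, together with the modulus $\rho(u)=c\sqrt u$ for the diffusion, which satisfies $\int_{0^+}\rho(u)^{-2}\,du=\infty$) pathwise uniqueness holds. The Yamada--Watanabe theorem \cite{revuz} then produces a unique strong solution $X_t$ of the extended equation for all $t\ge 0$.

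Next I would prove invariance of $[a,\infty)$. The idea is that on $\{x\le a\}$ the extended dynamics reduce to the pure, nonnegative drift $f(a)\ge 0$ (since $\sigma\equiv 0$ there), which cannot push the trajectory further down. To make this rigorous I would compare $X$ with the constant process $Z_t\equiv a$: writing $b_-(x):=\min(\tilde f(x),0)$, which is Lipschitz with $b_-(a)=0$, pathwise uniqueness shows that $Z_t\equiv a$ is the unique strong solution of $dZ_t=b_-(Z_t)\,dt+\sigma(Z_t)\,dW_t$, $Z_0=a$. Since $X_0=x_0\ge a=Z_0$ and $\tilde f(x)\ge b_-(x)$ for all $x$, the comparison theorem for one-dimensional SDEs with a common $\tfrac12$-Hölder diffusion coefficient \cite{revuz} gives $X_t\ge a$ for all $t\ge 0$ almost surely. (Equivalently, applying the It\^o--Tanaka formula to $(a-X_t)^+$, using that $\sigma$ vanishes below $a$ and the occupation-times formula, one obtains $(a-X_t)^+=-f(a)\int_0^t\mathbf{1}_{\{X_s<a\}}\,ds\le 0$, hence $X_t\ge a$.)

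Finally, since $X_t\in[a,\infty)$ for every $t$ and the extended coefficients agree with $f$ and $\sqrt{2g}$ on $[a,\infty)$, the process $X$ is a strong solution of \eqref{theo_eq_0}; conversely any solution of \eqref{theo_eq_0} is by definition $[a,\infty)$-valued and therefore also solves the extended equation, so uniqueness transfers back to \eqref{theo_eq_0}. I expect the delicate point to be the invariance argument: it is where the hypothesis $f(a)\ge 0$ is genuinely used, and it requires care precisely because the non-Lipschitz degeneracy of $\sigma$ at $a$ precludes elementary flow arguments and forces one to rely on pathwise uniqueness (via Yamada--Watanabe) in order to run the comparison.
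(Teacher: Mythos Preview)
Your argument is correct and rests on the same engine as the paper's proof: the elementary estimate $|\sqrt{u}-\sqrt{v}|\le\sqrt{|u-v|}$ combined with the Lipschitz continuity of $g$ gives a $\tfrac12$-H\"older diffusion coefficient, and one then invokes the Yamada--Watanabe pathwise uniqueness theorem together with weak existence under linear growth to obtain a unique global strong solution. The paper's proof stops essentially there (after additionally noting that $\sqrt{g}$ is genuinely Lipschitz on $[a+\delta,\infty)$, which it uses to rule out explosion). Your treatment is more complete in two respects: you make the extension of the coefficients from $[a,\infty)$ to $\mathbb{R}$ explicit, and you actually prove the invariance $X_t\ge a$ via a comparison with the constant solution $Z_t\equiv a$ of the auxiliary equation with drift $b_-=\min(\tilde f,0)$. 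This second point is where the compatibility condition $f(a)\ge 0$ is genuinely used, something the paper's proof does not spell out. One small caveat: in your parenthetical It\^o--Tanaka alternative the local time term $\tfrac12 L_t^a(X)$ is not obviously zero and would need a separate argument (via the occupation-times formula and the degeneracy $g(a)=0$), so the comparison-theorem route you give first is the cleaner of the two.
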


\begin{proof}
First observe that $f,g \in BC^2\left[a,\infty \right)$ implies they are globally Lipschitz and satisfy the linear growth condition. This last condition
is also satisfied by $\sqrt{g}$, and moreover, if $x,y \geq a+\delta$ then
\begin{equation*}
\left\lvert \sqrt{g(x)}-\sqrt{g(y)}\right\rvert \leq \sup_{z\geq a+\delta}\left\lbrace \dfrac{g'(z)}{2 \sqrt{g(z)}}\right\rbrace \left\lvert x-y\right\rvert.
\end{equation*}
This inequality implies that $\sqrt{g}$ is globally Lipschitz for $x \geq a+\delta$. So the lapse of existence of any local in time solution,
if it exists, can be arbitrarily extended, see for instance~\cite{jean}.

Now, the string of inequalities
\begin{eqnarray*}
\left\lvert \sqrt{x}-\sqrt{y}\right\rvert^2 &=& x+y-2\sqrt{xy} \\ \nonumber
&\leq& x+y-2\min\left\lbrace x,y\right\rbrace \\ \nonumber
&\leq& \max\left\lbrace x,y\right\rbrace-\min\left\lbrace x,y\right\rbrace \\ \nonumber
&=& \left\lvert x-y\right\rvert
\end{eqnarray*}
implies
\begin{displaymath}
\left\vert \sqrt{g(x)}-\sqrt{g(y)} \right\vert \leq \sqrt{\left\vert g(x)-g(y)\right\vert}.
\end{displaymath}
These results, together with the classical theorem of Watanabe and Yamada~\cite{ander,boro,jean,sv,wy}, imply existence and uniqueness of a strong solution to the SDE~\eqref{theo_eq_0}, which is defined for all times $t>0$.
\end{proof}

\begin{remark}
We are implicitly assuming all over this work that $g \ge 0$ so all the It\^o diffusions we consider are real-valued. In particular,
this implies $g'(a)>0$.
\end{remark}

\begin{remark}\label{remm}
An analogous result holds {\it mutatis mutandis} if the domain of definition of $f$ and $g$ is shifted to either $\left( -\infty, b\right]$ or
$\left[ a,b\right]$. For instance, the compatibility condition for the right boundary point should be $f(b)\leq 0$, i.e. the reflecting properties of the boundary should be preserved. From now on (unless explicitly indicated) we will state our results just for the original case of domain
$\left[ a, \infty \right)$, but with the understanding that they still hold for any of the other two after simple modifications.
\end{remark}

The following theorem characterizes the boundary behavior of the solution to an It\^o SDE of type~\eqref{theo_eq_0} and compares it with the properties of its formal Stratonovich counterpart.

\begin{theorem}\label{main2}
Let $f$ and $g$ be functions satisfying the same assumptions as in Proposition~\ref{exun}.
The boundary $\left\lbrace a\right\rbrace$ is accessible for the It\^o SDE
\begin{equation}\label{theo_eq_1}
dX_t = f(X_t) \, dt + \sqrt{2 g(X_t)} \, dW_t
\end{equation}
if and only if $f(a)< g'(a)$, and absorbing if and only if $f(a)=0$.
On the other hand, its formal Stratonovich dual
\begin{equation}\label{theo_eq_2}
dX_t = \left[ f(X_t) -\frac12 g'(X_t) \right] dt + \sqrt{2 g(X_t)} \circ dW_t
\end{equation}
only admits the constant solution $X_t=a$ if $f(a)=\frac{1}{2}g'(a)< g'(a)$.
\end{theorem}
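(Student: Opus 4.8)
The plan is to split the statement into three claims and treat them separately: (a) the boundary $\{a\}$ is accessible for the It\^o SDE~\eqref{theo_eq_1} if and only if $f(a)<g'(a)$; (b) $\{a\}$ is absorbing for~\eqref{theo_eq_1} if and only if $f(a)=0$; and (c) the formal Stratonovich dual~\eqref{theo_eq_2} admits the constant solution $X_t\equiv a$ if and only if $f(a)=\tfrac12 g'(a)$, in which case $f(a)<g'(a)$ automatically. Claims (b) and (c) are of the same elementary nature as Theorem~\ref{main1}: writing each equation in integral form and substituting $X_t\equiv a$, the stochastic term vanishes identically because $g(a)=0$, so the constant solves~\eqref{theo_eq_1} iff $f(a)=0$ and solves~\eqref{theo_eq_2} iff $f(a)-\tfrac12 g'(a)=0$; the inequality $\tfrac12 g'(a)<g'(a)$ follows from $g'(a)>0$, which holds because $a$ is a minimum of the nonnegative function $g$ and $g'(a)\neq0$. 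For (b) it remains to observe that ``$\{a\}$ is absorbing'' is equivalent to ``$X_t\equiv a$ is the solution started at $a$'': one implication is the strong Markov property applied at the first hitting time of $a$, the other is immediate, and uniqueness of the solution issued from $a$ is supplied by Proposition~\ref{exun}; hence $\{a\}$ is absorbing iff $f(a)=0$ (when $f(a)>0$ the unique solution from $a$ leaves $a$ at once because of the strictly positive drift).

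The substance of the theorem is claim (a), which I would derive from Feller's boundary classification for one-dimensional diffusions. On the open interval $(a,\infty)$ the solution of~\eqref{theo_eq_1} is a regular diffusion with drift coefficient $f$ and diffusion coefficient $\sqrt{2g}$, the latter strictly positive; fixing an interior point $c$, introduce the scale density $s'(y)=\exp\!\big(-\int_c^y f(z)/g(z)\,dz\big)$, the scale function $s(x)=\int_c^x s'(y)\,dy$, and the speed measure $m(dy)=dy/\big(g(y)s'(y)\big)$. Since the $BC^2$ hypotheses on $f$ and $g$ preclude explosion at $+\infty$ (Proposition~\ref{exun}), Feller's criterion says that $\{a\}$ is accessible exactly when $s(a+)>-\infty$ and $\int_{(a,c]}\big(s(y)-s(a+)\big)\,m(dy)<\infty$; and, as will be seen, for the class of equations at hand the first condition already forces the second, so accessibility reduces to $s(a+)>-\infty$. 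The key technical step, and the main obstacle, is the behaviour of $s'$ near $a$: using $g(y)=g'(a)(y-a)+O((y-a)^2)$, valid since $g\in C^2$ with $g(a)=0$ and $g'(a)>0$, together with $f(y)=f(a)+O(y-a)$, one checks that $f(z)/g(z)-\frac{f(a)}{g'(a)}\cdot\frac{1}{z-a}$ extends to a bounded function on a right neighbourhood of $a$ --- the numerator of this difference vanishes to second order at $a$ --- so that $\int_c^y f(z)/g(z)\,dz=\frac{f(a)}{g'(a)}\log(y-a)+O(1)$ as $y\to a+$, whence $s'(y)=(y-a)^{-\alpha}\,\ell(y)$ near $a$, with $\alpha:=f(a)/g'(a)\ge0$ and $\ell$ bounded and bounded away from $0$.

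Once this power-law behaviour is in hand the boundary classification is routine. The integral $\int_a s'(y)\,dy$ converges iff $\alpha<1$, so $s(a+)>-\infty$ iff $\alpha<1$; and when $\alpha<1$ the estimates $s(y)-s(a+)\asymp(y-a)^{1-\alpha}$ and $m(dy)\asymp(y-a)^{\alpha-1}\,dy$ show that $\big(s(y)-s(a+)\big)\,m(dy)$ is comparable to Lebesgue measure near $a$, hence integrable, whereas for $\alpha\ge1$ the scale integral already diverges. Therefore $\{a\}$ is accessible if and only if $\alpha<1$, i.e. if and only if $f(a)<g'(a)$, which is claim (a). As a consistency check one may note that, to leading order near $a$, the shifted process $X_t-a$ is a constant multiple of a squared Bessel process of dimension $2f(a)/g'(a)$, whose origin is classically accessible precisely when that dimension is below $2$. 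Assembling (a), (b) and (c) proves the theorem and exposes its point: the condition $f(a)=0$ that renders $\{a\}$ absorbing for~\eqref{theo_eq_1} and the condition $f(a)=\tfrac12 g'(a)$ under which the formal Stratonovich dual~\eqref{theo_eq_2} retains the constant solution $X_t\equiv a$ are mutually exclusive, because $g'(a)>0$.
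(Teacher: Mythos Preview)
Your proposal is correct and follows essentially the same route as the paper: both reduce the accessibility question to Feller's boundary classification via the scale/speed formalism, and both hinge on the asymptotic $s'(y)\asymp (y-a)^{-f(a)/g'(a)}$ near $a$, obtained from the expansion $g(y)=g'(a)(y-a)+O((y-a)^2)$. The paper splits into the cases $f(a)=0$ and $f(a)>0$ and bounds the double integral $\int_a^{a+\delta}\int_x^{a+\delta} g(y)^{-1}e^{\int_x^y f/g}\,dy\,dx$ by hand, whereas you treat both cases uniformly through the exponent $\alpha=f(a)/g'(a)$ and read off the power-law behaviour directly; this is a cosmetic rather than a substantive difference, and your squared-Bessel consistency check is a nice addition.
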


\begin{remark}
This theorem implies that SDE~\eqref{theo_eq_2} admits the constant solution $X_t=a$ only when SDE~\eqref{theo_eq_1} has a reflecting boundary at $a$;
respectively, when this boundary is absorbent, SDE~\eqref{theo_eq_2} does not admit this constant solution.
\end{remark}

\begin{remark}
From now on we will adopt the convention of denoting all constants by the letter $C$, independently of their specific value.
\end{remark}

\begin{proof}
The boundary behavior is studied by means of the method of speed/scale measures~\cite{ander,boro,hell}.
According to it, the boundary $\left\lbrace a \right\rbrace$ is accessible for equation~\eqref{theo_eq_1} if and only if
\begin{displaymath}
\int_{a}^{a+\delta} \int_x^{a+\delta}\dfrac{1}{g(y)} \, e^{\int_x^y\frac{f(s)}{g(s)} \, ds} \, dy \, dx<\infty,
\end{displaymath}
for some constant $\delta>0$. We consider the cases $f(a)=0$ (for which the inequality $f(a) < g'(a)$ is automatically guaranteed) and $f(a)>0$ separately.

First we assume that $f(a)=0$. Then by L'H\^opital rule we find
$$
\lim_{x\rightarrow a^+}\frac{f(x)}{g(x)}=\frac{f'(a)}{g'(a)} < \infty.
$$
So the integral $\int_x^y\frac{f(s)}{g(s)} \, ds$ is finite for any fixed $\delta$ since $y \leq a+\delta$. Moreover we have the estimate
\begin{eqnarray}\nonumber
\int_{a}^{a+\delta} \int_x^{a+\delta}\dfrac{1}{g(y)} \, e^{\int_x^y\frac{f(s)}{g(s)} \, ds} \, dy \, dx &<&
C\int_{a}^{a+\delta} \int_x^{a+\delta}\dfrac{dy \, dx}{g(y)} \\ \nonumber
&=& C\int_{a}^{a+\delta} \int_{a}^{y}\dfrac{dx \, dy}{g(y)} \\ \nonumber
&=& C\int_{a}^{a+\delta} \dfrac{y-a}{g(y)} \, dy < \infty,
\end{eqnarray}
where the first inequality comes from the boundedness of $\int_x^y\frac{f(s)}{g(s)} \, ds$ and the last one from the second application
of L'H\^opital rule $\lim_{y\rightarrow a^+}\frac{y-a}{g(y)}=\frac{1}{g'(a)}$. Thus $\left\lbrace a\right\rbrace$ is an accessible boundary for $f(a)=0$. Since $X_t=a$ is a solution to~\eqref{theo_eq_1} in this case, then by uniqueness of solution we conclude this boundary is absorbing.

Now we turn to examining the finiteness of the integral
\begin{displaymath}
\int_{a}^{a+\delta} \int_x^{a+\delta}\dfrac{1}{g(y)} \, e^{\int_x^y\frac{f(s)}{g(s)} \, ds} \, dy \, dx
\end{displaymath}
in the case $f(a)>0$; for a small enough $\epsilon > 0$ (clearly it should be sufficiently smaller than $\delta$) we find
\begin{eqnarray*}
\int_{a+\epsilon}^{a+\delta} \int_x^{a+\delta}\dfrac{1}{g(y)} \, e^{\int_x^y\frac{f(s)}{g(s)} \, ds} \, dy \, dx &=&
\int_{a+\epsilon}^{a+\delta} \int_x^{a+\delta}\dfrac{1}{f(y)} \, \dfrac{f(y)}{g(y)}\, e^{\int_x^y\frac{f(s)}{g(s)} \, ds} \, dy \, dx \\
&\geq& C\int_{a+\epsilon}^{a+\delta} \int_x^{a+\delta}\dfrac{f(y)}{g(y)} \, e^{\int_x^y\frac{f(s)}{g(s)} \, ds} \, dy \, dx \\
&=& C\int_{a+\epsilon}^{a+\delta} \int_x^{a+\delta}\dfrac{d}{dy} \, e^{\int_x^y\frac{f(s)}{g(s)} \, ds} \, dy \, dx \\
&=& C\int_{a+\epsilon}^{a+\delta} \left( e^{\int_x^{a+\delta}\frac{f(s)}{g(s)} \, ds}-1 \right)dx \\ &=& C\int_{a+\epsilon}^{a+\delta} \left( e^{\int_x^{a+\delta}\frac{(s-a)f(s)}{g(s)}\frac{ds}{s-a}} \right)dx-\delta \\ &=& C\int_{a+\epsilon}^{a+\delta} \left( e^{\int_x^{a+\delta}\frac{(s-a)f(a)}{g(s)}\frac{ds}{s-a}+ \int_x^{a+\delta} \frac{(s-a)[f(s)-f(a)]}{g(s)}\frac{ds}{s-a}} \right)dx-\delta \\
&\geq& C\int_{a+\epsilon}^{a+\delta} \left( e^{\int_x^{a+\delta}\frac{(s-a)f(a)}{g(s)}\frac{ds}{s-a}} \right)dx-\delta,
\end{eqnarray*}
where the first inequality comes from the sign of $f(a)$ by choosing $\delta$ small enough, and the second comes from the boundedness of
$\int_x^{a+\delta} \frac{f(s)-f(a)}{g(s)} \, ds$ in the range $x \in [a,a+\delta]$ (i.e. the bound is uniform in $\epsilon$)
as can be seen by means of the application of L'H\^opital rule $\lim_{s \rightarrow a^+}\frac{f(s)-f(a)}{g(s)}=\frac{f'(a)}{g'(a)}$.
Now use the Taylor expansion $g(s)=g'(a)(s-a)+\frac12 g''(\bar{a})(s-a)^2$, where $\bar{a} \in [a,a+\delta]$, and choose a $\delta$ small enough to find
\begin{eqnarray}\nonumber
\int_{a + \epsilon}^{a+\delta} \left( e^{\int_x^{a+\delta}\frac{(s-a)f(a)}{g(s)}\frac{ds}{s-a}} \right)dx-\delta &\geq&
C\int_{a + \epsilon}^{a+\delta} \left( e^{\int_x^{a+\delta}\frac{f(a)}{g'(a)}\frac{ds}{s-a}} \right)dx-\delta \\ \nonumber
&=& C\int_{a + \epsilon}^{a+\delta} \left( \dfrac{\delta}{x-a}\right)^{\frac{f(a)}{g'(a)}}dx-\delta;
\end{eqnarray}
so by taking the limit $\epsilon \to 0$ we establish
\begin{eqnarray}\nonumber
\int_{a}^{a+\delta} \int_x^{a+\delta}\dfrac{1}{g(y)} \, e^{\int_x^y\frac{f(s)}{g(s)} \, ds} \, dy \, dx &\geq&
C\int_{a}^{a+\delta} \left( \dfrac{\delta}{x-a}\right)^{\frac{f(a)}{g'(a)}}dx-\delta
\\ \nonumber
&=& \infty \Longleftrightarrow \, f(a)\geq g'(a).
\end{eqnarray}
Arguing analogously one also establishes
\begin{eqnarray}\nonumber
\int_{a}^{a+\delta} \int_x^{a+\delta}\dfrac{1}{g(y)} \, e^{\int_x^y\frac{f(s)}{g(s)} \, ds} \, dy \, dx &\leq&
C\int_{a}^{a+\delta} \left( \dfrac{\delta}{x-a}\right)^{\frac{f(a)}{g'(a)}}dx-\delta
\\ \nonumber
&<& \infty \Longleftrightarrow \, f(a) < g'(a).
\end{eqnarray}
Combining these two results we conclude that $\{a\}$ is accessible if and only if $f(a)<g'(a)$.
Moreover, together with the compatibility condition $f(a)>0$ and uniqueness of solution we find that the boundary is instantaneously reflecting
in this case \cite{ander2}.

Finally, the part of the statement that corresponds to the Stratonovich SDE is obvious.
\end{proof}

To further exemplify this theorem we reconsider equation~\eqref{2as}. For any initial condition $x_0 \in [0,1]$, according to Theorem~\ref{main2}
(see also Remark~\ref{remm}), the boundary points $\{0\}$ and $\{1\}$ are both accessible and absorbing. Therefore, by Markovianity, we conclude that $\lim_{t \to \infty} X_t \in \{0,1\}$ a.s. On the other hand, the formal Stratonovich counterpart of this equation, i.e. equation~\eqref{st2as}, does not possess either of these solutions.

\section{Intermediate-time dynamics}
\label{inttime}

In this section we show that the results concerning the long-time dynamics proven in the previous section may have also consequences at intermediate times.
In particular, we prove that the mean time to absorption is finite in some cases, what means that there exists a well-defined time scale to absorption.	

For the sake of simplicity let us start with the SDE
\begin{equation*}
dX_t = \sqrt{2 X_t(1-X_t)} \, dW_t, \qquad X_0=x_0,
\end{equation*}
which falls under the assumptions of Theorems~\ref{main1} and~\ref{main2}.
Denote by $T(x_0)$ the mean time to absorption; it obeys the partial differential equation
\begin{equation*}
x_0(1-x_0) \, \dfrac{\partial^2 T}{\partial x_0^2}=-1
\end{equation*}
subject to the boundary conditions $T(0)=T(1)=0$~\cite{hell}.
The explicit solution to this boundary value problem is easy computable and reads
\begin{equation*}
T(x_0)=-(1-x_0) \log \left( 1-x_0\right)-x_0 \log \left( x_0\right),
\end{equation*}
which is clearly bounded uniformly in $x_0 \in [0,1]$. Therefore absorption has a well-defined time scale and consequently the asymptotic behavior
mentioned in the previous section will be reached at intermediate times.

Now we can move back to our previous example~\eqref{2as}, i.e.
\begin{equation*}
dX_t = X_t(1-X_t) \, dt + \sqrt{2 X_t(1-X_t)} \, dW_t, \qquad X_0=x_0,
\end{equation*}
which again falls under the hypotheses of Theorems~\ref{main1} and~\ref{main2}.
Now the mean time to absorption obeys the partial differential equation
\begin{equation*}
x_0(1-x_0) \, \dfrac{\partial T}{\partial x_0} + x_0(1-x_0) \, \dfrac{\partial^2 T}{\partial x_0^2}=-1
\end{equation*}
subject to $T(0)=T(1)=0$. Its solution reads
\begin{equation*}
T(x_0)= \frac{e^{-x_0}-1}{e-1} \int_{0}^{1} e^{y} \, \log \left( \dfrac{1-y}{y}\right) \, dy
+ e^{ -x_0} \int_{0}^{x_0} e^{y} \, \log \left( \dfrac{1-y}{y}\right) \, dy.
\end{equation*}
The uniform boundedness of this result for $x_0 \in [0,1]$ is guaranteed by the summable character of the integrands.
So the existence of a well-defined time scale to absorption arises in this example too.

We can generalize these examples through the following statement.

\begin{theorem}\label{mat}
Let $f$ and $g$ be functions as in Proposition~\ref{exun}, Remark~\ref{remm}, and Theorem~\ref{main2}.
Then the solution to the It\^o SDE
\begin{equation}\nonumber
dX_t = f(X_t) \, dt + \sqrt{2 g(X_t)} \, dW_t, \qquad x_0 \in [a,b],
\end{equation}
subject to a finite state space $[a,b]$ in which both boundary points are accessible, and at least one is absorbing and the other is either absorbing or reflecting, is absorbed in finite mean time.
\end{theorem}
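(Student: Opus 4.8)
The plan is to reduce the statement to the finiteness of an explicit scale/speed integral at each boundary point, and to recognise that integral as precisely the one whose finiteness is equivalent to accessibility in the proof of Theorem~\ref{main2}. Let $A\subseteq\{a,b\}$ be the (nonempty, by hypothesis) set of absorbing endpoints --- so $f$ vanishes there by Theorem~\ref{main2} --- and let $\tau=\inf\{t:X_t\in A\}$ be the absorption time, an endpoint not in $A$ being reflecting. It suffices to exhibit a nonnegative function $T$ that is bounded on $[a,b]$, belongs to $C^2(a,b)$, solves $g\,T''+f\,T'=-1$ on $(a,b)$, vanishes on $A$, and satisfies $\partial_s T=\lim T'/p=0$ at each reflecting endpoint (the reflecting condition being taken in natural scale, as is appropriate at a degenerate endpoint): from such a $T$ one obtains $\mathbb{E}_x[\tau]\le T(x)<\infty$ by the usual verification argument --- It\^o's formula gives $T(X_{t\wedge\rho_n})=T(x)-(t\wedge\rho_n)+\text{martingale}$, where $\rho_n$ exhausts $(a,b)$ through non-absorbing collars and the reflecting natural-scale condition kills the boundary term; taking expectations, letting $t\to\infty$ and then $n\to\infty$, and using $\tau<\infty$ a.s.\ (the interval is bounded and its endpoints accessible) yields the bound.

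To build $T$ I would integrate the ODE twice in natural scale, using $p(x)=\exp\!\big(-\int_{x_1}^x\frac{f(s)}{g(s)}\,ds\big)$ and $m(x)=1/(g(x)p(x))$. For $A=\{a,b\}$ this gives $T(x)=\int_a^b G(x,\eta)\,m(\eta)\,d\eta$ with $G(x,\eta)=P(a,x\wedge\eta]\,P(x\vee\eta,b]/P(a,b]$, where $P(u,v]:=\int_u^v p$; for $A=\{a\}$ with $b$ reflecting, $T(x)=\int_a^x p(z)\int_z^b m(y)\,dy\,dz$; the case $A=\{b\}$ is the mirror image by Remark~\ref{remm}. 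That such a $T$ solves the ODE with the prescribed boundary data is routine; the only substantive point is finiteness, and it must be checked separately near $a$ and near $b$.

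This near-boundary integrability is where I expect the real work, and it is exactly the accessibility computation of Theorem~\ref{main2}. At an endpoint $e$ with $g(e)>0$ (non-degenerate) the densities $p,m$ are bounded nearby and there is nothing to prove. At a degenerate endpoint $e$ one has $g(y)\sim|g'(e)|\,|y-e|$ and $p(y)\sim C\,|y-e|^{-f(e)/g'(e)}$, hence $m(y)\sim C\,|y-e|^{f(e)/g'(e)-1}$; substituting into the formula for $T$ and absorbing manifestly finite remainders, the contribution of $e$ to $T(x)$ becomes a constant multiple of $\int_e\!\int g(y)^{-1}e^{\int f/g}$, the very integral that Theorem~\ref{main2} shows to converge \emph{if and only if $\{e\}$ is accessible} --- the key algebraic cancellation being that the factor $G(x,\eta)$ (respectively $\int_z^b m$) contributes exactly the compensating power $|\eta-e|$ (respectively the logarithm) that renders the integrand integrable. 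Since both $a$ and $b$ are accessible by hypothesis, every contribution is finite and $0\le T\le\sup_{[a,b]}T<\infty$. Thus the sole obstacle is the behaviour at the degenerate boundaries, and it has already been disposed of in the proof of Theorem~\ref{main2}; everything else is the bookkeeping indicated above.
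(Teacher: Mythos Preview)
Your argument is correct, but it follows a genuinely different route from the paper's. The paper gives a purely probabilistic proof: it shows that for $T$ large enough the absorption probability by time $T$ is bounded below uniformly in the starting point, $\varepsilon_T:=\inf_{x_0}P_{x_0}(X_T\in\mathcal A)>0$, then uses the Markov property to obtain the geometric tail bound $P(\tau>nT)\le(1-\varepsilon_T)^n$ and sums to get $\mathbb E[\tau]\le T/\varepsilon_T$. Your approach is instead analytic: you construct the mean-exit-time function $T(x)$ as an explicit scale/speed integral solving $gT''+fT'=-1$ with the appropriate absorbing/reflecting boundary data, verify $\mathbb E_x[\tau]\le T(x)$ via It\^o's formula, and reduce the finiteness of $T$ at each degenerate endpoint to exactly the accessibility integral already controlled in Theorem~\ref{main2}. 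This is a pleasant economy, since it recycles the hard estimate rather than arguing afresh, and it yields an explicit formula for the mean absorption time (as in the two worked examples preceding the theorem); the paper's argument, by contrast, is more elementary and self-contained, needs no Green's function bookkeeping, and delivers almost-sure absorption as a by-product rather than invoking it.

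One point to tighten: your verification step at a reflecting degenerate endpoint deserves a line more care. Since the process actually reaches the reflecting boundary, stopping at interior collars $\rho_n$ and then letting $n\to\infty$ requires knowing that $T$ extends continuously to that endpoint (it does, by the power-counting you sketch) and that the local-martingale part stays a true martingale up to the boundary; the natural-scale Neumann condition $T'/p\to0$ you state is the right ingredient, but the phrase ``kills the boundary term'' hides a dominated-convergence check. Also, the parenthetical ``respectively the logarithm'' is not quite right: at a reflecting degenerate endpoint the compensating factor in $\int_z^b m$ is again a power, not a logarithm, and the product $p(z)\int_z^b m$ is asymptotically constant. These are genuinely bookkeeping matters and do not affect the validity of your strategy.
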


\begin{proof}
By Proposition~\ref{exun} we know there exists a unique and global strong solution $X_t$ to this equation.
Denote by $\mathcal{A}$ the set of absorbing boundary points that, according to our assumptions, could be
either $\{a\}$, $\{b\}$, or $\{a,b\}$. For every initial condition $\gamma_0 \in [a,b]$ define
$$
\epsilon_T(\gamma_0):= P_{\gamma_0}(X_T \in \mathcal{A}),
$$
so $\epsilon_T(\gamma_0)>0$ for every large enough $T>0$ by assumption.
Now assume $\{b\}$ is absorbing to find
$$
\epsilon_T(\gamma_0) \ge P_{\gamma_0}(X_T =b) \ge P_{x_0}(X_T =b)
$$
for any $x_0 \le \gamma_0$ by the $t-$continuity and Markovianity of $X_t$~\cite{jean,revuz}. Analogously if $\{a\}$ is absorbing then
$$
\epsilon_T(\gamma_0) \ge P_{\gamma_0}(X_T =a) \ge P_{x_0}(X_T =a)
$$
for any $x_0 \ge \gamma_0$. From now on we consider a $T$ large enough so $\min\{P_{x_0}(X_T =b),P_{x_0}(X_T =a)\}>0$ for every given
initial condition $x_0$, which is always possible by the assumption on the accessibility of both boundary points.

For the time being let us assume that $\mathcal{A} \equiv \{a,b\}$. Now fix some $c$, $a < c < b$, to find
\begin{eqnarray}\nonumber
\inf_{x_0 \in [a,b]} P_{x_0}(X_T \in \mathcal{A}) &=& \min \left\{ \inf_{x_0 \in [a,c]} P_{x_0}(X_T \in \mathcal{A}),
\inf_{x_0 \in [c,b]} P_{x_0}(X_T \in \mathcal{A}) \right\}
\\ \nonumber
&\ge& \min \left\{ \inf_{x_0 \in [a,c]} P_{x_0}(X_T =a),
\inf_{x_0 \in [c,b]} P_{x_0}(X_T = b) \right\}
\\ \nonumber
&\ge& \min \left\{ P_{c}(X_T =a), P_{c}(X_T = b) \right\} >0,
\end{eqnarray}
where we have employed, as in the previous paragraph, the Markovianity and $t-$continuity of $X_t$, and where a larger enough $T$ has been selected,
in case that had been necessary. Therefore we can define
$$
\varepsilon_T := \inf_{\gamma_0 \in [a,b]} \epsilon_T(\gamma_0)
$$
as a positive quantity. Note that the same conclusion arises analogously in the cases $\mathcal{A} \equiv \{a\}$ and $\mathcal{A} \equiv \{b\}$.

From now on we fix the value of $T$; then compute
\begin{eqnarray}\nonumber
P\left(X_{(n+1)T} \in \mathcal{A}\right) &=& P\left(X_{(n+1)T} \in \mathcal{A} \cap X_{nT} \in \mathcal{A}\right)
+ P\left(X_{(n+1)T} \in \mathcal{A} \cap X_{nT} \not\in \mathcal{A}\right) \\ \nonumber
&=& P\left(X_{(n+1)T} \in \mathcal{A} \Big| X_{nT} \in \mathcal{A}\right)P\left(X_{nT} \in \mathcal{A}\right)
\\ \nonumber & &
+ P\left(X_{(n+1)T} \in \mathcal{A} \Big| X_{nT} \not\in \mathcal{A}\right)
P\left(X_{nT} \not\in \mathcal{A}\right) \\ \nonumber
&=& P\left(X_{nT} \in \mathcal{A}\right)
+ P\left(X_{(n+1)T} \in \mathcal{A} \Big| X_{nT} \not\in \mathcal{A}\right)
\left[ 1-P\left(X_{nT} \in \mathcal{A}\right) \right] \\ \nonumber
&\ge& P\left(X_{nT} \in \mathcal{A}\right) + \varepsilon_T \left[ 1-P\left(X_{nT} \in \mathcal{A}\right) \right]
\end{eqnarray}
for $n=1,2,\cdots$,
where the inequality follows from the Markov property of $X_t$, and obviously $P\left(X_{T} \in \mathcal{A}\right) \ge \varepsilon_T$.
Compare this inequality with the recursion relation
\begin{eqnarray}\nonumber
U(n+1) &=& U(n) + \varepsilon_T [1-U(n)] \\ \nonumber
U(1) &=& \varepsilon_T
\end{eqnarray}
that can be solved to yield
$$
U(n)=1-(1-\varepsilon_T)^n
$$
for all $n \ge 1$. Clearly $P\left(X_{T} \in \mathcal{A}\right) \ge U(1)$ and by induction
\begin{eqnarray}\nonumber
P\left(X_{(n+1)T} \in \mathcal{A}\right) &\ge& P\left(X_{nT} \in \mathcal{A}\right) + \varepsilon_T \left[ 1-P\left(X_{nT} \in \mathcal{A}\right) \right]
\\ \nonumber
&=& \varepsilon_T + P\left(X_{nT} \in \mathcal{A}\right) \left[ 1-\varepsilon_T \right] \\ \nonumber
&\ge& \varepsilon_T + U(n) \left[ 1-\varepsilon_T \right] \\ \nonumber
&=& U(n) + \varepsilon_T \left[ 1-U(n) \right] \\ \nonumber
&=& U(n+1),
\end{eqnarray}
therefore
$$
P\left(X_{nT} \in \mathcal{A}\right) \ge 1-(1-\varepsilon_T)^n \longrightarrow 1
$$
when $n \to \infty$, so adsorption happens almost surely in the long time limit.

Now define
$$
\tau_{x_0}(\omega):= \inf \left\{ t \ge 0 \, : \, \left( X_{t}|X_{0}=x_0 \right) \in \mathcal{A} \right\},
$$
that is, the random variable $\tau_{x_0}(\omega)$ is the exit time from the set $\mathcal{A}^c$ or, in other words, the absorption time.
We compute
\begin{eqnarray}\nonumber
\mathbb{E} (\tau_{x_0}) &=& \int_0^\infty P\left( \tau_{x_0}>t \right) \, dt \\ \nonumber
&=& \sum_{n=0}^\infty \int_{nT}^{(n+1)T} P\left( \tau_{x_0}>t \right) \, dt \\ \nonumber
&\le& T \sum_{n=0}^\infty P\left( \tau_{x_0}> nT \right) \\ \nonumber
&=& T \sum_{n=0}^\infty P\left( X_{nT} \not\in \mathcal{A} \right) \\ \nonumber
&=& T \sum_{n=0}^\infty \left[ 1 - P\left( X_{nT} \in \mathcal{A} \right) \right] \\ \nonumber
&\le& T \sum_{n=0}^\infty (1-\varepsilon_T)^n \\ \nonumber
&=& \frac{T}{\varepsilon_T} < \infty
\end{eqnarray}
for each $x_0 \in [a,b]$, so the statement follows.
\end{proof}

\section{Back to the Feller branching diffusion}
\label{feller}

Let us reconsider now our first example
\begin{equation}\label{fbd2}
dX_t=\sqrt{2 X_t} \, dW_t, \qquad X_0=x_0 > 0.
\end{equation}
We will see that Theorem~\ref{mat} is sharp in the sense that it is not guaranteed that It\^o diffusions with unbounded state spaces possess finite
mean absorption times. This is the case of the Feller branching diffusion, which state space is $[0,\infty)$.
To compute the mean exit time of the Feller branching diffusion from the interval $\left( 0, M \right)$ one needs to solve the boundary value problem
\begin{equation*}
x_0 \, \dfrac{\partial^2 T_M}{\partial x_0^2}=-1, \qquad T_M(0)=T_M(M)=0;
\end{equation*}
its explicit solution reads
\begin{equation*}
T_M(x_0)= x_0 \, \log \left( \frac{M}{x_0} \right).
\end{equation*}
It becomes unbounded as $M \rightarrow \infty$ for any $x_0 >0$, meaning that the mean absorption time $T(x_0)$ diverges.

Additionally, the Feller branching diffusion constitutes an excellent example of how analytical issues arise in the It\^o vs Stratonovich dilemma. Of course,
the SDE
\begin{equation}\nonumber
dX_t=\sqrt{2 X_t} \, dW_t
\end{equation}
is doubtless an It\^o diffusion since, as mentioned before, it arises as the continuum limit of critical
Galton-Watson branching processes, so its interpretation is not questionable. However, as a theoretical exercise,
we can consider the Stratonovich SDE
\begin{equation}\label{stfbd}
dX_t=\sqrt{2 X_t} \circ dW_t, \qquad X_0=x_0 > 0;
\end{equation}
which means
\begin{equation}\nonumber
X_t = x_0 + \int_0^t \sqrt{2 X_s} \circ dW_s.
\end{equation}
This equation clearly admits the solution
\begin{equation}\label{solun}
X_t= \left( \sqrt{x_0} + \frac{W_t}{\sqrt{2}} \right)^2.
\end{equation}
It is obvious that this $X_t$ will reach zero almost surely (although in infinite mean time), and also that zero is an instantaneously
reflecting boundary point for it.
Now define the family of stopping times
\begin{eqnarray}\nonumber
T_1 &:=& \inf \{ t>0 : X_t=0 \}, \\ \nonumber
T_n &:=& \inf \{ t>T_{n-1} +\varepsilon : X_t=0 \}, \quad n=2,3,\cdots,
\end{eqnarray}
for any fixed $\varepsilon>0$, to see that
\begin{equation}\label{solfam1}
X_t= \left( \sqrt{x_0} + \frac{W_t}{\sqrt{2}} \right)^2 \mathlarger{\mathlarger{\mathbbm{1}}}_{t < T_n}
\end{equation}
is a solution to~\eqref{stfbd} for any $n=1,2,\cdots$. Consider also
\begin{equation}\nonumber
dX_t=\sqrt{2 X_t} \circ dW_t, \qquad X_0=0,
\end{equation}
which admits as solution
\begin{equation}\label{solfam2}
X_t = \frac{(W_{t}-W_{\tau})^2}{2} \, \mathlarger{\mathlarger{\mathbbm{1}}}_{t > \tau}
\end{equation}
for any fixed $\tau \ge 0$. Combining these two results, \eqref{solfam1} and~\eqref{solfam2}, yields
an uncountable family of solutions
to~\eqref{stfbd}, unlike~\eqref{fbd2} that, by Proposition~\ref{exun}, possesses a unique solution.
Note also that the formal It\^o counterpart of~\eqref{stfbd}, i.e.
\begin{equation}\nonumber
dX_t= \frac12 \, dt + \sqrt{2 X_t} \, dW_t, \qquad X_0=x_0 > 0
\end{equation}
possesses, again by Proposition~\ref{exun}, a unique solution, which turns out to be given by~\eqref{solun}.
These facts illustrate how analytical issues play a role in the It\^o vs Stratonovich dilemma.

\section{Conclusions}
\label{conclusions}

We have illustrated our results, all throughout this work, with several examples that possess non-Lipschitz, but H\"older$-1/2$ continuous diffusion
terms. In particular, the square root function has played the key role in this respect in all of our examples. So the natural question that in this
moment arises is the following one: does this sort of diffusion term arise naturally in applications or could it be considered as a pathological
mathematical counterexample? To answer this question we can for instance follow van Kampen; according to him the diffusion term
``determines the magnitude of the fluctuations and must be found from physical considerations ($\cdots$).
For instance, the fluctuations in the number of electrons arriving on an anode will be roughly proportional to the square root of that number''~\cite{kampen}.
This observation therefore justifies the potential relevance of our results in physics. But actually the presence of square root diffusion terms is quite
standard in different applications. Let us briefly mention some examples of this fact.
The spatially extended version (i.e., the stochastic partial differential equation version) of the equation
\begin{equation}\nonumber
dX_t = \sqrt{X_t} \, dW_t,
\end{equation}
i.e. of the Feller branching diffusion, has been used to model the growth and motion of plankton populations~\cite{adler};
for related works (and in turn related to the critical Galton-Watson process) see~\cite{bahram,yrs}.
This equation has also been used in mathematical finance~\cite{ander}; for an extension of this model see~\cite{ander2}.
The equation
\begin{equation}\nonumber
dX_t = (\alpha -\beta X_t) \, dt + \sqrt{X_t(1-X_t)} \, dW_t,
\end{equation}
with $\beta \geq \alpha \geq 0$ has been considered in population genetics~\cite{feller2}; for the case $\alpha=\beta=0$ one can see~\cite{hof}.
The SDE
\begin{equation}\nonumber
dX_t = \gamma X_t(1-X_t) \, dt + \sqrt{X_t(1-X_t)} \, dW_t,
\end{equation}
with $\gamma > 0$, has been used to describe reaction processes, as well as its partial differential version
has been used to describe reaction-diffusion processes~\cite{doer}; related equations, both in the ordinary and partial
stochastic differential setting, can be found in~\cite{munoz}. It is also remarkable that some spatially extended versions
of this equation appear in the field of high energy physics~\cite{peschanski,prikhodko}.

During decades, the interpretation of noise dilemma has been regarded in the light of the transformation that takes equation~\eqref{its} into~\eqref{sfi} and its reverse counterpart~\cite{mmcc}. Herein we have seen that a class of SDEs, examples of which are relevant in physics and other applications, cannot be treated in these simple terms.
As a matter of fact, the presence of absorbing states is quite commonly accompanied by square root diffusion terms in these
physically motivated SDEs. The lack of Lipschitz continuity of the diffusion term precisely in these states implies that the
classical existence and uniqueness theorem cannot be applied and one has to rely instead on the Watanabe-Yamada theorem; but however
this theorem applies only to the It\^o interpretation of noise. Moreover, we have shown that these states are reached with probability one,
and even in finite mean time, in relevant examples, what in turn has strong consequences on the dynamics of a given SDE in the long
and even intermediate time. So, according to the results presented herein, for times potentially relevant in applications, the formal
use of the transformation in Theorem~\ref{itotost} or a change in the noise interpretation may turn a unique solution into an
uncountable number of them, or erase an absorbing state. The Wanatabe-Yamada theorem implies that the use of the It\^o interpretation
is safe in these cases, but however the Stratonovich interpretation may be affected by multiplicity of solutions as we have shown
in Section~\ref{feller}.
This is perhaps the reason why, in all the examples we have listed, the It\^o interpretation was chosen
(in those cases in which the interpretation of noise is explicitly mentioned), although our arguments,
to the best of our knowledge, are shown nowhere in those references (the arguments employed to choose this interpretation are
usually more related to modeling~\cite{mmcc}).
All in all, although the Stratonovich interpretation of noise has been sometimes preferred in the physical literature~\cite{kampen},
our results show that this is something that cannot be assumed in full generality.

In summary, we have shown that the It\^o vs Stratonovich dilemma cannot always be reduced to a redefinition of the drift term, and that
there are relevant physical examples in which more subtle stochastic analytical facts
(such as uniqueness/multiplicity of solution/s) have to be taken into account. As a matter of
fact, this drift redefinition is sometimes ill posed, and cannot be used to analyze a class of SDEs, even in the numerical sense.
As we have seen, the unjustified application of this result may lead to the disappearance of absorbing states, which meaning can be
as important as extinction in population dynamics or bankrupt in a financial system, in the SDE at hand.
Our results clarify as well why the It\^o interpretation has been chosen in a series of applied examples
from a stochastic analytical viewpoint.

\section*{Acknowledgements}

This work has been partially supported by the Government of Spain (Ministry of Economy, Industry, and Competitiveness) through Project MTM2015-72907-EXP.

\vskip5mm
\noindent
{\footnotesize
\'Alvaro Correales\par\noindent
Departamento de Matem\'aticas\par\noindent
Universidad Aut\'onoma de Madrid\par\noindent
E-28049, Madrid, Spain\par\noindent
{\tt alvaro.correales@estudiante.uam.es}\par\noindent
\& Department of Mathematics\par\noindent
Stockholm University\par\noindent
SE-10691, Stockholm, Sweden\par\noindent
{\tt alvaro@math.su.se}\par\vskip1mm\noindent
Carlos Escudero\par\noindent
Departamento de Matem\'aticas Fundamentales\par\noindent
Universidad Nacional de Educaci\'on a Distancia\par\noindent
E-28040, Madrid, Spain\par\noindent
{\tt cescudero@mat.uned.es}\par\vskip1mm\noindent
}
\end{document}